\newcommand{\cH}{\mathcal{H}}
\newcommand{\R}{\mathbb{R}}
\newtheorem{defi}{Definition}
\newtheorem{thm}{Theorem}
\newtheorem{lem}{Lemma}
\newtheorem{rem}{Remark}
\def\beq{\begin{equation}}
\def\eeq{\end{equation}}
\def\beqs{\begin{equation*}}
\def\eeqs{\end{equation*}}
\newcommand{\todo}[1]{\textcolor{red}{#1}}
\begin{document}

\title{Splitting gradient algorithms for solving monotone equilibrium problems
\thanks{The authors were supported in part by National Foundation 
for Science and Technology Development (NAFOSTED, Vietnam) 
under grant number 101.01-2017.315.}
}

\author{Le Dung Muu \and Phung Minh Duc \and  Xuan Thanh Le}
\institute{Le Dung Muu \at
Thang Long Institute of Mathematics and Applied Sciences (TIMAS), Thang Long University, Hanoi, Vietnam,
\email{ldmuu@math.ac.vn} \and
Phung Minh Duc \at  Faculty of Basic Science, College of Technology Medical Equipment, Hanoi, Vietnam,\\
\email{ducphungminh@gmail.com} \and
Xuan Thanh Le \at
Institute of Mathematics, Vietnam Academy of Science and Technology, Hanoi, Vietnam,\\
\email{lxthanh@math.ac.vn}
}

\maketitle

\begin{abstract}
It is well known that the projection method is not convergent for monotone equilibrium problems. 
Recently Sosa \textit{et al.} in \cite{SS2011} proposed a projection algorithm 
ensuring convergence for paramonotone equilibrium problems. 
In this paper we modify %the projection algorithm in \cite{SS2011} 
\todo{this algorithm}
to obtain a splitting convergent one for the case when the bifunction is the sum of the two ones. 
At each iteration, two strongly convex subprograms are required to solve separately, 
one for each component bifunction. We show that the algorithm is convergent for paramonotone bifunction 
without any Lipschitz type condition as well as H\"older continuity of the \todo{involved bifunctions}. 
Furthermore, we show that the ergodic sequence defined by the \todo{algorithm's iterates} %obtained by the algorithm 
converges to a solution without paramonotonicity property.
We use the proposed algorithm to solve a jointly constrained Cournot-Nash model.
The computational results show that this algorithm is efficient for the model with a restart strategy.

\keywords{
Monotone equilibria;
splitting algorithm;
ergodic sequence;
restart strategy
}

\end{abstract}

%%%%%%%%%%%%%%%%%%%%%%%%%%%%%%%%%%%%
%%%%%%%%%%%%%%%%%%%%%%%%%%%%%%%%%%%%

\section{Introduction}

Let $\cH$ be a real Hilbert space endowed with weak topology defined by the inner product
$\langle \cdot , \cdot \rangle$ and its induced norm $\| \cdot \|$. Let $C \subseteq \cH$ be a
nonempty closed convex subset and $f: \cH \times \cH \to \R \cup \{+\infty\}$ a bifunction
such that $f(x, y) < +\infty$ for every $x, y \in C$. The equilibrium problem defined by the
Nikaido-Isoda-Fan inequality that we are going to consider in this paper is given as
   $$\text{Find}\ x \in C: f(x, y) \geq 0 \  \forall y \in C. \eqno(EP)$$
This inequality first was used in 1955 by Nikaido-Isoda \cite{NI1955} in convex game models.
Then in 1972 Ky Fan \cite{F1972} called this inequality a minimax one and established existence
theorems for Problem $(EP)$. After the appearance of the paper by Blum and Oettli \cite{BO1994}
  Problem $(EP)$ has been contracted much attention
of researchers. It has been shown in \cite{BCPP2013, BO1994, MO1992} that some important problems
such as optimization, variational inequality, Kakutani fixed point and Nash equilibrium can be
formulated in the form of $(EP)$. Many papers concerning the solution existence, stabilities
as well as algorithms for Problem $(EP)$ have been published
(see e.g. \cite{HM2011, IS2003, M2003, MQ2009, QAM2012, QMH2008, SS2011}
and the   survey paper \cite{BCPP2013}). A basic method for Problem $(EP)$ is the gradient
(or projection) one, where the sequence of iterates is defined by taking
\begin{equation}\label{1m}
 x^{k+1} = \min\left\{ \lambda_k f(x^k,y) +\frac{1}{2} \|y-x^k\|^2 : y \in C \right\},
\end{equation}
with $\lambda_k$ is some appropriately chosen real number.
Note that in the variational inequality case, where $f(x,y) := \langle F(x), y-x\rangle$,
the iterate $x^{k+1}$ defined by
(\ref{1m}) becomes
$$x^{k+1} = P_C\left(x^k - \lambda_k F(x^k)\right),$$
where $P_C$ stands for the metric projection onto $C$.

It is well known that  under certain conditions on the parameter $\lambda_k$,
the projection method is convergent if $f$ is strongly pseudomonotone or
paramonotone \cite{IS2003, QMH2008}. However when $f$ is monotone, it may fail to converge.
In order to obtain convergent algorithms for monotone, even pseudomonotone, equilibrium problems,
the extragradient method first proposed by Korpelevich \cite{K1976} for the saddle point
and related problems has been extended to equilibrium problems \cite{QMH2008}.
In this extragradient algorithm, at each iteration, it requires solving the two strongly
convex programs
 \begin{equation}\label{2m}
 y^k = \min\left\{ \lambda_k f(x^k,y) +\frac{1}{2} \|y-x^k\|^2 : y \in C \right\},
\end{equation}
 \begin{equation}\label{3m}
 x^{k+1} = \min\left\{ \lambda_k f(x^k,y) +\frac{1}{2} \|y-y^k\|^2 : y \in C \right\},
\end{equation}
which may cause computational cost. In order to reduce the computational cost,
several convergent algorithms that require solving only one strongly convex program
or computing only one projection at each iteration have been proposed.
\todo{These algorithms were applied to} some classes of bifunctions 
\todo{such as} strongly pseudomonotone and paramonotone \todo{ones},
with or without using an ergodic sequence (see e.g. \cite{AHT2016, DMQ2016, SS2011}). 
In another direction, also for the sake of reducing computational cost, 
some splitting algorithms have been developed
(see e.g. \cite{AH2017, HV2017, M2009}) for monotone equilibrium problems where
the bifunctions can be decomposed into the sum of two bifunctions.
In these algorithms the convex subprograms (resp. regularized subproblems) involving
the bifunction $f$ can be replaced by the two convex subprograms (resp. regularized subproblems),
one for each $f_i$ $(i=1, 2)$ independently.

In this paper we modify the projection algorithm in \cite{SS2011} 
to obtain  a splitting convergent algorithm for paramonotone equilibrium problems. 
The main future of this algorithm is that
at each iteration, it requires solving only one strongly convex
program. Furthermore, in the case when
$f = f_1 + f_2$, this strongly convex subprogram can be replaced by the two strongly convex
subprograms, one for each $f_1$ and $f_2$ as the algorithm in \cite{AH2017, HV2017},
but for the convergence we do not require any additional conditions such as
H\"older continuity and Lipschitz type condition as in \cite{AH2017, HV2017}. 
We also show that the ergodic sequence defined by the \todo{algorithm's} iterates %obtained by the algorithm is convergent 
\todo{converges} to a solution without paramonotonicity property.
We apply the ergodic algorithm for solving a Cournot-Nash model with joint constraints. 
The computational results and experiences show that the ergodic algorithm is efficient for this model with a restart strategy.

The remaining part of the paper is organized as follows. The next section \todo{lists} preliminaries
containing some lemmas that will be used in proving the convergence of  the proposed algorithm.
Section \ref{SectionAlgorithm} is devoted to the description of
the algorithm and its convergence analysis. 
\todo{Section \ref{SectionExperiments} shows an application of the algorithm 
in solving a Cournot-Nash model with joint constraints.
Section \ref{SectionConclusion} closed the paper with some conclusions.
}
%We apply the  the algorithm for solving a Cournot-Nash model with joint constraints. 
%The computational results and experiences show that the ergodic algorithm is efficient for this model with a restart strategy.

%%%%%%%%%%%%%%%%%%%%%%%%%%%%%%%%%%%%%%%%%%%%%%%%%%%%%%%%%%%%%%%%%%%%%%%%

\section{Preliminaries}

We recall from \cite{BCPP2013} the following well-known definition on monotonicity of bifunctions.
\begin{defi}
A bifunction $f: \cH \times \cH \to \R \cup \{+\infty\}$ is said to be
\begin{itemize}[leftmargin = 0.5 in]
\item[(i)] strongly monotone on $C$ with modulus $\beta > 0$ (shortly $\beta$-strongly monotone) if
  $$f(x, y) + f(y, x) \leq -\beta \| y - x \|^2  \quad \forall x, y \in C;$$

\item[(ii)] monotone on $C$ if
  $$f(x, y) + f(y, x) \leq 0 \quad \forall x, y \in C;$$

\item[(iii)] strongly pseudo-monotone on $C$ with modulus $\beta > 0$
(shortly $\beta$-strongly pseudo-monotone) if
  $$f(x, y) \geq 0 \implies f(y, x) \leq -\beta\| y - x \|^2 \quad \forall x, y \in C;$$

\item[(iv)] pseudo-monotone on $C$ if
  $$f(x, y) \geq 0 \implies f(y, x) \leq 0 \quad \forall x, y \in C.$$

\item[(v)] paramonotone on $C$ with respect to a set $S$ if
 $$x^* \in S, x\in C \text{ and } f(x^*, x) = f(x, x^*) = 0 \text{ implies } x \in S.$$
\end{itemize}
\end{defi}

Obviously, $(i) \implies (ii) \implies (iv)$ and $(i) \implies (iii) \implies (iv)$.
Note that a strongly pseudo-monotone bifunction may not be monotone.
Paramonotone bifunctions have been used in e.g. \cite{SS2011,S2011}. Some properties of paramonotone operators can be found in \cite{IS1998}.
Clearly in the case of optimization problem when
$f(x,y) = \varphi(y) - \varphi(x)$, the bifunction  $f$ is paramonotone on $C$ with
respect to the solution set of the problem $\min_{x\in C} \varphi(x)$.

The following well known lemmas will be used for proving the convergence of the algorithm
to be described in the next section.

% \begin{lem}\label{lemyen}
% Let $\cH$ be a real Hilbert space with the inner product $\langle \cdot , \cdot \rangle$
% and its induced norm $\| \cdot \|$. Then for $x, y, z \in \cH$ and $0\leq \gamma \leq 1$, one has
% \beqs
% \| \gamma x + (1 - \gamma) y - z \|^2
% = \gamma \| x - z \|^2 + (1 - \gamma) \| y - z \|^2 - \gamma(1-\gamma) \left(\| x - z \| - \| y- z \| \right)^2.
% \eeqs
% \end{lem}
% \begin{proof} By definition of the inner product and its reduced norm we have
% \begin{align*}
% & \ \| \gamma x + (1-\gamma) y - z \|^2\\
% = & \ \| \gamma (x-z) + (1-\gamma) (y-z) \|^2\\
% = & \ \gamma^2 \| x - z \|^2 + (1 - \gamma)^2 \| y - z \|^2
% + 2 \gamma (1 - \gamma) \langle x - z, y - z \rangle\\
% = & \ \gamma \| x - z \|^2 + (1 - \gamma) \| y - z \|^2
% - \gamma (1 - \gamma) \left( \| x - z \|^2 + \| y - z \|^2 - 2 \langle x - z, y - z \rangle \right)\\
% = & \ \gamma \| x - z \|^2 + (1 - \gamma) \| y - z \|^2
% - \gamma(1 - \gamma) \left( \| x - z\| - \| y - z \| \right)^2.
% \end{align*}
% This proves the lemma.$\square$
% \end{proof}

\begin{lem}\label{lem1}{\em (see \cite{TX1993} Lemma 1)}
Let $\{\alpha_k\}$ and $\{\sigma_k\}$ be two sequences of nonnegative numbers such that
$\alpha_{k+1} \leq \alpha_k + \sigma_k$ for all $k \in \mathbb{N}$,
where $\sum_{k=1}^{\infty} \sigma_k < \infty$. Then the sequence $\{\alpha_k\}$ is convergent.
\end{lem}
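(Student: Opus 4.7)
The plan is to introduce an auxiliary sequence that is manifestly monotone, deduce its convergence, and then recover the convergence of $\{\alpha_k\}$ by subtracting off a vanishing tail.

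First I would exploit the summability hypothesis to define the tail sums $s_k := \sum_{j=k}^{\infty} \sigma_j$, which are well defined and finite because $\sum_{k=1}^{\infty} \sigma_k < \infty$. Since the series converges, $s_k \to 0$ as $k \to \infty$, and by construction $s_k - s_{k+1} = \sigma_k$.

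Next I would set $\beta_k := \alpha_k + s_k$ and verify that this sequence is non-increasing:
\begin{equation*}
\beta_{k+1} = \alpha_{k+1} + s_{k+1} \leq \alpha_k + \sigma_k + s_{k+1} = \alpha_k + s_k = \beta_k,
\end{equation*}
where the inequality uses the hypothesis $\alpha_{k+1} \leq \alpha_k + \sigma_k$. Moreover $\beta_k \geq 0$ because both $\alpha_k$ and $s_k$ are nonnegative. A non-increasing sequence bounded below must converge, so $\beta_k \to \beta^*$ for some $\beta^* \geq 0$.

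Finally, writing $\alpha_k = \beta_k - s_k$ and using $s_k \to 0$, I conclude that $\alpha_k \to \beta^*$, which gives the convergence claim. There is no real obstacle here; the only subtle point is noticing that the natural candidate $\alpha_k$ itself need not be monotone, so one must add the tail sum $s_k$ to manufacture monotonicity before passing to the limit.
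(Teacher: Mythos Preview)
Your argument is correct: introducing the tail sum $s_k$ to render $\beta_k = \alpha_k + s_k$ non-increasing, then subtracting off the vanishing $s_k$, is a clean and complete proof of the lemma.

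Note, however, that the paper itself does not supply a proof of this statement; it simply quotes the result from Tan and Xu \cite{TX1993} as a preliminary tool. So there is no in-paper argument to compare against. Your proof is essentially the standard one for this classical fact and would serve perfectly well if the authors had chosen to include it.
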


\begin{lem}\label{lem2}{\em (see \cite{P1979})}
Let $\cH$ be a Hilbert space, $\{x^k\}$ a sequence in $\cH$. Let $\{r_k\}$ be a sequence of
nonnegative number such that $\sum_{k=1}^{\infty} r_k = +\infty$ and set
$z^k := \dfrac{\sum_{i=1}^k r_i x^i}{\sum_{i=1}^k r_i}$.
Assume that there exists a nonempty, closed convex set $S \subset \cH$ satisfying:
\begin{itemize}[leftmargin = 0.5 in]
\item[(i)] For every $z \in S$, $\lim_{n \to \infty}\|z^k - z\|$ exists;
\item[(ii)] Any weakly cluster point of the sequence $\{z^k\}$ belongs to $S$.
\end{itemize}
Then the sequence $\{z^k\}$ weakly converges.
\end{lem}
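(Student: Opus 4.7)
The plan is to invoke the classical Opial trick: show that the bounded sequence $\{z^k\}$ has a unique weak cluster point and then conclude that it weakly converges to that point.

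First, I would observe that condition (i) immediately forces $\{z^k\}$ to be bounded, since $\|z^k\|\le \|z^k-z\|+\|z\|$ for any fixed $z\in S$ and the former factor converges, hence is bounded. Boundedness in a Hilbert space implies the existence of at least one weak cluster point, and by condition (ii) every such cluster point lies in $S$.

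The heart of the argument is to show uniqueness of the weak cluster point. Suppose, for contradiction, that two subsequences satisfy $z^{k_j}\rightharpoonup z_1$ and $z^{m_j}\rightharpoonup z_2$ with $z_1\ne z_2$. By (ii) both $z_1,z_2\in S$, so by (i) the limits $\ell_i:=\lim_{k\to\infty}\|z^k-z_i\|^2$ exist for $i=1,2$. Expanding and subtracting gives
\begin{equation*}
\|z^k-z_1\|^2-\|z^k-z_2\|^2 \;=\; 2\langle z^k,\,z_2-z_1\rangle + \|z_1\|^2-\|z_2\|^2,
\end{equation*}
and the left-hand side converges to $\ell_1-\ell_2$. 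Passing to the limit along $\{k_j\}$ and along $\{m_j\}$ separately (using weak convergence of the inner product against the fixed vector $z_2-z_1$) yields two expressions for $\ell_1-\ell_2$, whose difference is $2\langle z_1-z_2,\,z_2-z_1\rangle = -2\|z_1-z_2\|^2$. This must be zero, contradicting $z_1\ne z_2$. So all weak cluster points coincide.

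The last step is standard: a bounded sequence in a Hilbert space having exactly one weak cluster point converges weakly to that point (otherwise a subsequence stays weakly away from it, and by weak compactness of bounded sets one would extract a second, distinct cluster point). I expect the only subtle step is the uniqueness argument above, where one must be careful that the inner products along the two subsequences really converge to $\langle z_1,z_2-z_1\rangle$ and $\langle z_2,z_2-z_1\rangle$ respectively; this is exactly the definition of weak convergence applied to the fixed test vector $z_2-z_1$, so it goes through cleanly. Nothing about the specific ergodic form $z^k=\sum r_ix^i/\sum r_i$ enters the proof — only properties (i) and (ii) are used.
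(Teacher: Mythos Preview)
Your argument is correct and is the standard Opial-type proof of this weak-convergence principle. The paper itself does not supply a proof of this lemma; it merely quotes the result with a reference to Passty~\cite{P1979}, so there is no ``paper's own proof'' to compare against---your write-up would serve perfectly well as the omitted justification, and as you note, the ergodic structure of $z^k$ plays no role.
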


\begin{lem}\label{LemmaXu} {\em (see \cite{X2002})}
Let $\{\lambda_k\}, \{\delta_k\}, \{\sigma_k\}$ be sequences of real numbers such that
\begin{itemize}[leftmargin = 0.5 in]
 \item[(i)] $\lambda_k \in (0, 1)$ for all $k \in \mathbb{N}$;
 \item[(ii)] $\sum_{k = 1}^{\infty} \lambda_k = +\infty$;
 \item[(iii)] $\limsup_{k \to +\infty} \delta_k \le 0$;
 \item[(iv)] $\sum_{k = 1}^{\infty} |\sigma_k| < +\infty$.
\end{itemize}
Suppose that $\{\alpha_k\}$ is a sequence of nonnegative real numbers satisfying
\beqs
\alpha_{k+1} \le (1 - \lambda_k) \alpha_k + \lambda_k \delta_k + \sigma_k \qquad \forall k \in \mathbb{N}.
\eeqs
Then we have $\lim_{k \to +\infty} \alpha_k = 0$.
\end{lem}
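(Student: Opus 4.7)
The plan is to use the standard telescoping/product trick for such recursions. Fix an arbitrary $\varepsilon > 0$. Using condition (iii), choose $N$ so large that $\delta_k \le \varepsilon$ for every $k \ge N$; using condition (iv), also ensure $\sum_{j=N}^{\infty} |\sigma_j| < \varepsilon$. For $k \ge N$ the hypothesis gives
\beq\label{planrec}
\alpha_{k+1} \le (1-\lambda_k)\alpha_k + \lambda_k \varepsilon + \sigma_k,
\eeq
since $\lambda_k > 0$.

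Next I would unroll \eqref{planrec}. Setting $\Pi_{j}^{k} := \prod_{i=j+1}^{k}(1-\lambda_i)$ (with the empty product equal to $1$), iteration yields
\beqs
\alpha_{k+1} \le \Pi_{N-1}^{k} \alpha_N + \varepsilon \sum_{j=N}^{k} \lambda_j \Pi_{j}^{k} + \sum_{j=N}^{k} \sigma_j \Pi_{j}^{k}.
\eeqs
Three bounds finish the argument. First, because each $\lambda_i \in (0,1)$ one has $1-\lambda_i \le e^{-\lambda_i}$, so $\Pi_{N-1}^{k} \le \exp\bigl(-\sum_{i=N}^{k}\lambda_i\bigr) \to 0$ as $k \to \infty$ by (ii). Second, the telescoping identity
\beqs
\lambda_j \Pi_{j}^{k} = \Pi_{j}^{k} - \Pi_{j-1}^{k}
\eeqs
gives $\sum_{j=N}^{k} \lambda_j \Pi_{j}^{k} = 1 - \Pi_{N-1}^{k} \le 1$, so the middle term is at most $\varepsilon$. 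Third, since $0 \le \Pi_{j}^{k} \le 1$, the last term is bounded in absolute value by $\sum_{j=N}^{\infty}|\sigma_j| < \varepsilon$.

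Combining, taking $\limsup_{k\to\infty}$ with $N$ fixed yields $\limsup_{k\to\infty}\alpha_{k+1} \le 0 + \varepsilon + \varepsilon = 2\varepsilon$. Since $\varepsilon>0$ was arbitrary and $\alpha_k \ge 0$, I conclude $\lim_{k \to \infty}\alpha_k = 0$.

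The only subtle point, which I would double-check carefully, is the index bookkeeping in the telescoping identity $\lambda_j \Pi_{j}^{k} = \Pi_{j}^{k} - \Pi_{j-1}^{k}$ and the corresponding collapse to $1 - \Pi_{N-1}^{k}$; this is the whole reason the middle term is controlled by a single $\varepsilon$ rather than by $\varepsilon \sum_j \lambda_j$, which would diverge. Note also that replacing $\delta_k$ by the constant upper bound $\varepsilon$ is legitimate regardless of the sign of $\delta_k$: if $\delta_k$ is very negative the true recursion is only more favorable, and nonnegativity of $\alpha_k$ keeps every step meaningful.
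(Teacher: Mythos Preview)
Your argument is correct: the unrolling with $\Pi_j^k = \prod_{i=j+1}^k (1-\lambda_i)$ is set up properly, the telescoping identity $\lambda_j \Pi_j^k = \Pi_j^k - \Pi_{j-1}^k$ does collapse the middle sum to $1 - \Pi_{N-1}^k \le 1$, and the remaining two terms are handled as you describe. One small remark: you implicitly use that $\alpha_N$ is a finite number so that $\Pi_{N-1}^k \alpha_N \to 0$; this is fine since the $\alpha_k$ are assumed to be real numbers, but it is worth saying explicitly.

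As for comparison with the paper: the paper does not prove this lemma at all. It is stated with a reference to Xu \cite{X2002} and used as a black box in the proof of Theorem~\ref{thm1}. Your proof is essentially the standard one found in Xu's original article, so there is no methodological difference to discuss.
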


%%%%%%%%%%%%%%%%%%%%%%%%%%%%%%%%%%%%%%%%%%%%%%%%%%%%%%%%%%%%%%%%%%%%%%%%

\section{The problem, algorithm and its convergence}\label{SectionAlgorithm}

\subsection{The problem}
In what follows, for the following equilibrium problem
$$\text{Find}\ x \in C: f(x, y) \geq 0 \  \forall y \in C \eqno(EP)$$
  we suppose that $f(x, y) = f_1(x, y) + f_2(x, y)$
and that $f_i(x, x) = 0$ ($i=1, 2$) for every $x, y \in C$.
The following assumptions for the bifunctions $f, f_1, f_2$ will be used in the sequel.

\begin{itemize}[leftmargin = 0.5 in]
\item[(A1)] For each $i =1, 2$ and each $x\in C$,
the function $f_i(x, \cdot)$ is convex and sub-differentiable,
while $f(\cdot, y)$ is weakly upper semicontinuous on $C$;
\item[(A2)] If $\{x^k\} \subset C$ is bounded,
then for each $i = 1, 2$, the sequence $\{g^k_i\}$
with $g^k_i \in \partial f_i(x^k,x^k)$ is bounded;
\item[(A3)] The bifunction $f$ is monotone on $C$.
\end{itemize}

Assumption (A2) has been used in e.g. \cite{S2011}.
Note that Assumption (A2) is satisfied if the functions $f_1$ and $f_2$ are jointly weakly continuous
on an open convex set containing $C$ (see \cite{VSN2015} Proposition 4.1).\\

The dual problem of $(EP)$ is
$$\text{find}\ x \in C: f(y, x) \leq 0 \  \forall y \in C. \eqno(DEP)$$
We  denote the solution sets of $(EP)$ and $(DEP)$ by $S(C,f)$ and $S^d(C,f)$, respectively.
A relationship between $S(C,f)$ and $S^d(C,f)$ is given in the following lemma.

\begin{lem}\label{lem3}{\em (see \cite{KS2000} Proposition 2.1)}
(i) If $f(\cdot, y)$ is weakly upper semicontinuous
and $f(x, \cdot)$ is convex for all $x, y \in C$,
then $S^d(C,f) \subset S(C,f)$.

(ii) If $f$ is pseudomonotone, then $S(C, f) \subset S^d(C, f)$.
\end{lem}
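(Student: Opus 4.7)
For part (ii), I would start with the easy direction. If $x^* \in S(C,f)$, then $f(x^*,y) \ge 0$ for every $y \in C$. Pseudomonotonicity immediately yields $f(y,x^*) \le 0$ for every $y \in C$, which is exactly $x^* \in S^d(C,f)$. No further work is needed.

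For part (i), the plan is the classical Minty-type line-segment argument. Fix $x^* \in S^d(C,f)$ and an arbitrary $y \in C$; I want to show $f(x^*,y) \ge 0$. For $t \in (0,1]$ set $y_t := ty + (1-t)x^* \in C$ (using convexity of $C$). Since $x^* \in S^d(C,f)$, we have $f(y_t,x^*) \le 0$. Using convexity of $f(y_t,\cdot)$ together with $f(y_t,y_t) = 0$, I would write
\beqs
0 = f(y_t, y_t) \le t\, f(y_t,y) + (1-t)\, f(y_t,x^*),
\eeqs
which, combined with $f(y_t,x^*) \le 0$, gives $t\, f(y_t,y) \ge -(1-t)\,f(y_t,x^*) \ge 0$, hence $f(y_t,y) \ge 0$ for every $t \in (0,1]$.

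Now let $t \to 0^+$. Then $y_t \to x^*$ in norm, hence also weakly. Since $f(\cdot,y)$ is weakly upper semicontinuous on $C$, one has $\limsup_{t \to 0^+} f(y_t,y) \le f(x^*,y)$, and therefore $f(x^*,y) \ge 0$. As $y \in C$ was arbitrary, $x^* \in S(C,f)$.

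The only delicate point is the last limiting step: we need the inequality $f(y_t,y) \ge 0$ to survive in the limit. This is why the hypothesis is phrased as weak upper semicontinuity of $f(\cdot,y)$; it is actually a bit more than needed here, since $y_t \to x^*$ strongly, but since weak upper semicontinuity implies upper semicontinuity in the norm topology, the argument goes through without trouble. I do not foresee any other obstacle; the rest is routine convexity manipulation.
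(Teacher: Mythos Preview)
Your argument is correct. The paper itself does not supply a proof of this lemma; it simply cites Konnov and Schaible \cite{KS2000}, Proposition~2.1, where exactly the Minty-type line-segment argument you outline is used for part~(i), together with the one-line deduction from pseudomonotonicity for part~(ii). The only tacit ingredient you invoke is $f(y_t,y_t)=0$, which is not listed among the hypotheses of the lemma as stated but is a standing assumption throughout the paper (and in \cite{KS2000}); apart from that, nothing needs to be added.
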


Therefore, under the assumptions (A1)-(A3) one has $S(C, f) = S^d(C, f)$. In this paper
we suppose that $S(C, f)$ is nonempty.

%%%%%%%%%%%%%%%%%%%%%%%%%%%%%%%%%%%%

\subsection{The algorithm and its convergence analysis}

The algorithm below is a gradient one for paramonotone equilibrium problem $(EP)$.
The stepsize is computed as in the algorithm for equilibrium problem in \cite{SS2011}.

\begin{algorithm}[H]
\caption{A splitting algorithm for solving paramonotone or strongly
pseudo-monotone equilibrium problems.}\label{alg2}
\begin{algorithmic}
\State \textbf{Initialization:} Seek $x^0\in C$.
Choose a sequence $\{\beta_k\}_{k \geq 0} \subset \mathbb{R}$
satisfying the following conditions
\beqs
\quad \sum_{k=0}^\infty \beta_k = +\infty, \quad \sum_{k=0}^\infty \beta_k^2 < +\infty.
\eeqs
\State \textbf{Iteration} $k = 0, 1, \ldots$:\\
\qquad Take $g_1^k \in \partial_2 f_1(x^k, x^k), g_2^k \in \partial_2 f_2(x^k, x^k)$.\\
\qquad Compute
\begin{align*}
\eta_k &:= \max\{\beta_k, \|g_1^k\|, \|g_2^k\|\}, \ \lambda_k := \dfrac{\beta_k}{\eta_k},\\
y^k &:= \arg\min\{\lambda_k f_1(x^k, y) + \dfrac{1}{2}\|y - x^k\|^2 \mid y \in C\},\\
x^{k+1} &:= \arg\min\{\lambda_k f_2(x^k, y) +\dfrac{1}{2}\|y - y^k\|^2 \mid y \in C\}.
\end{align*}
\end{algorithmic}
\end{algorithm}

%\begin{rem}\label{rema3}
%When $f_1\equiv 0$, this algorithm becomes the exact form of the one in \cite{SS2011}.
%\end{rem}

\begin{thm} \label{thm1}
In addition to the assumptions {\em (A1), (A2), (A3)} we suppose that
$f$ is paramonotone on $C$, and that either int $C \not=\emptyset$ or for each $x\in C$ both bifunctions $f_1(x, \cdot)$, $f_2(x, \cdot)$ are continuous at a point in $C$. Then the sequence $\{x^k\}$ generated
by  the algorithm \ref{alg2} converges weakly to a solution of $(EP)$.
Moreover, if $f$ is strongly pseudomonotone, then $\{x^k\}$ strongly converges to
the unique solution of $(EP)$.
\end{thm}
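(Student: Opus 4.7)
The plan is to derive a quasi-Fejer-type inequality for $\{x^k\}$ with respect to $S := S(C,f)$, extract a weakly convergent subsequence whose limit paramonotonicity forces into $S$, and then upgrade to full weak convergence via an Opial-type argument.

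First, I would extract the basic estimate from the strong convexity (with modulus $1$) of the two subproblems. Their optimality gives, for every $x^* \in S$ and with $y = x^*$,
\begin{align*}
\lambda_k f_1(x^k, x^*) + \tfrac{1}{2}\|x^* - x^k\|^2 &\ge \lambda_k f_1(x^k, y^k) + \tfrac{1}{2}\|y^k - x^k\|^2 + \tfrac{1}{2}\|x^* - y^k\|^2,\\
\lambda_k f_2(x^k, x^*) + \tfrac{1}{2}\|x^* - y^k\|^2 &\ge \lambda_k f_2(x^k, x^{k+1}) + \tfrac{1}{2}\|x^{k+1} - y^k\|^2 + \tfrac{1}{2}\|x^* - x^{k+1}\|^2.
\end{align*}
Adding these and then bounding $-\lambda_k f_i(x^k,\cdot) \le -\lambda_k\langle g_i^k,\cdot-x^k\rangle$ via Cauchy-Schwarz and Young's inequality (splitting $x^{k+1}-x^k = (x^{k+1}-y^k) + (y^k-x^k)$), the two negative squared-distance terms $\tfrac12\|y^k-x^k\|^2$ and $\tfrac12\|x^{k+1}-y^k\|^2$ should exactly absorb the quadratic remainders. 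Using the stepsize identity $\lambda_k \|g_i^k\| \le \beta_k$ that is built into the definition of $\lambda_k = \beta_k/\eta_k$, this collapses to
\[
\|x^{k+1} - x^*\|^2 \le \|x^k - x^*\|^2 + 2\lambda_k f(x^k, x^*) + C\beta_k^2
\]
for some universal $C$.

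Next, Lemma \ref{lem3} applied under (A1), (A3) gives $S(C,f) = S^d(C,f)$, so $f(x^k, x^*) \le 0$. Combined with $\sum\beta_k^2 <\infty$ and Lemma \ref{lem1}, this yields convergence of $\|x^k - x^*\|$ for every $x^* \in S$; in particular $\{x^k\}$ is bounded. Telescoping the inequality provides $\sum_k \lambda_k(-f(x^k, x^*)) < \infty$. Boundedness of $\{x^k\}$ together with (A2) keeps $\{\|g_i^k\|\}$ bounded, so $\eta_k$ is bounded and $\lambda_k \ge \beta_k/M$ for some $M$, yielding $\sum_k \lambda_k = +\infty$. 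Consequently some subsequence $\{x^{k_j}\}$ satisfies $f(x^{k_j}, x^*) \to 0$, and by boundedness I may further assume $x^{k_j} \rightharpoonup \bar x$. Weak upper semicontinuity of $f(\cdot, x^*)$ from (A1) gives $f(\bar x, x^*) \ge 0$, while $\bar x \in C$ and $x^* \in S^d$ give $f(\bar x, x^*) \le 0$; hence $f(\bar x, x^*) = 0$. Monotonicity combined with $f(x^*, \bar x) \ge 0$ forces $f(x^*, \bar x) = 0$ as well, so paramonotonicity of $f$ with respect to $S$ yields $\bar x \in S$.

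The main obstacle is then upgrading this single cluster point to weak convergence of the whole sequence. Having $\|x^k - z\|$ convergent for every $z \in S$, Opial's lemma reduces this to showing every weak cluster point of $\{x^k\}$ lies in $S$. My plan is to first prove the step-decrease $\|x^{k+1} - x^k\| = O(\beta_k) \to 0$ by substituting $y = x^k$ in the first subproblem and $y = y^k$ in the second (which give $\|y^k - x^k\| \le \lambda_k\|g_1^k\| \le \beta_k$ and a comparable bound for $\|x^{k+1} - y^k\|$), and then to use this asymptotic quasi-stationarity together with the summability $\sum_k \lambda_k(-f(x^k,x^*)) < \infty$ and $\sum_k\lambda_k=\infty$ to propagate $\liminf(-f(\cdot,x^*)) = 0$ into any weakly convergent subsequence; the Slater-type regularity hypothesis (nonempty interior of $C$ or a continuity point of $f_i(x,\cdot)$) is what enables the sum-rule/continuity arguments needed here. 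Once every weak cluster is in $S$, Opial's lemma concludes weak convergence of $\{x^k\}$ to an element of $S$.

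For the strongly pseudomonotone case the conclusion is immediate: strong pseudomonotonicity upgrades $-f(x^k, x^*) \ge \beta\|x^k - x^*\|^2$, so $\sum_k \lambda_k \|x^k - x^*\|^2 < \infty$ with $\sum_k \lambda_k = \infty$ forces $\liminf_k \|x^k - x^*\| = 0$, and since $\|x^k - x^*\|$ already converges its limit must be zero, giving strong convergence to the (unique) solution $x^*$.
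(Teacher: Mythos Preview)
Your overall strategy---derive the Fej\'er-type estimate
\[
\|x^{k+1}-x^*\|^2 \le \|x^k-x^*\|^2 + 2\lambda_k f(x^k,x^*) + C\beta_k^2,
\]
use $S(C,f)=S^d(C,f)$ to make the middle term nonpositive, invoke Lemma~\ref{lem1} for convergence of $\|x^k-x^*\|$, telescope to get $\sum_k\lambda_k(-f(x^k,x^*))<\infty$ with $\sum_k\lambda_k=\infty$, extract a subsequence with $f(x^{k_j},x^*)\to 0$ and $x^{k_j}\rightharpoonup\bar x$, and then use weak upper semicontinuity, monotonicity and paramonotonicity to place $\bar x$ in $S$---is exactly the paper's approach. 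Two points of divergence deserve comment.

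First, you misplace the role of the regularity hypothesis (int\,$C\neq\emptyset$ or continuity of $f_i(x,\cdot)$ at a point of $C$). In the paper it is used at the very beginning, to justify the optimality condition $0\in\partial h_i^k(\cdot)+N_C(\cdot)$ via the subdifferential sum rule, which is what yields the strong-convexity inequalities you start from. It plays no role later in any ``propagation'' argument.

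Second, your plan for upgrading from one cluster point $\bar x\in S$ to weak convergence of the whole sequence is over-engineered and not actually justified: showing $\|x^{k+1}-x^k\|=O(\beta_k)$ is fine, but ``propagating $\liminf(-f(\cdot,x^*))=0$ into any weakly convergent subsequence'' does not follow from step-decrease alone, and you give no mechanism for it. The paper instead argues directly: once $\bar x\in S$ is in hand, the already-established fact that $\|x^k-\bar x\|$ converges (now with $x^*=\bar x$), together with $x^{k_j}\rightharpoonup\bar x$, is invoked to conclude $x^k\rightharpoonup\bar x$. So you should drop the step-decrease detour and follow this more direct line.

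For the strongly pseudomonotone case your argument (from $\sum_k\lambda_k\|x^k-x^*\|^2<\infty$ and $\sum_k\lambda_k=\infty$ deduce $\liminf\|x^k-x^*\|=0$, hence the already-convergent sequence $\|x^k-x^*\|$ has limit $0$) is a valid and slightly more elementary alternative to the paper's use of Lemma~\ref{LemmaXu}; the paper instead reads off the contraction $\|x^{k+1}-x^*\|^2\le(1-2\beta\lambda_k)\|x^k-x^*\|^2+2\beta_k^2$ and applies Xu's lemma directly.
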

\begin{proof} First, we show that, for each $x^* \in S(f, C)$, the sequence $\{\|x^k - x^*\|\}$ is convergent.

Indeed, for each $k \geq 0$, for simplicity of notation, let
\begin{equation*}
h_1^k (x) := \lambda_k f_1(x^k, x) + \frac{1}{2}\| x-x^k \|^2,
\end{equation*}
\begin{equation*}
h_2^k (x) := \lambda_k f_2(x^k, x) + \frac{1}{2}\| x-y^k \|^2.
\end{equation*}
By Assumption (A1), the functions $h_1^k$ is strongly convex with modulus $1$
and subdifferentiable, which implies
\begin{equation} \label{ct2}
h_1^k (y^k) + \langle u_1^k, x - y^k \rangle + \frac{1}{2} \| x - y^k \|^2 \leq h_1^k(x) \quad \forall x \in C
\end{equation}
for any $u_1^k \in \partial h_1^k (y^k)$. On the other hand, from the definition of $y^k$, 
using the regularity condition, by the optimality condition for convex programming, we have
$$0 \in \partial h_1^k (y^k) + N_C (y^k)$$
 In turn, this implies that there exists
$-u_1^k \in \partial h_1^k (y^k)$ such that $\langle u_1^k, x - y^k \rangle \geq 0$
for all $x \in C$. Hence, from (\ref{ct2}), for each $x \in C$, it follows that
$$ h_1^k(y^k) + \frac{1}{2} \| x - y^k \|^2 \leq h_1^k(x), $$
i.e.,
$$\lambda_k f_1(x^k, y^k) + \frac{1}{2}\| y^k - x^k \|^2 + \dfrac{1}{2}\|x - y^k\|^2
\leq \lambda_k f_1(x^k, x) + \frac{1}{2}\| x - x^k \|^2,$$
or equivalently,
\begin{equation} \label{ct3}
\|y^k - x\|^2 \leq \|x^k - x\|^2 +2\lambda_k \left( f_1(x^k, x)-f_1(x^k, y^k) \right) - \|y^k - x^k\|^2.
\end{equation}
Using the same argument for $x^{k+1}$, we obtain
\begin{equation} \label{ct4}
\|x^{k+1} - x\|^2 \leq \|y^k - x\|^2 +2\lambda_k \left( f_2(x^k, x) -f_2(x^k, x^{k+1}) \right) - \|x^{k+1} - y^k\|^2.
\end{equation}
Combining (\ref{ct3}) and (\ref{ct4}) yields
\begin{align} \label{ct5}
 \|x^{k+1} - x\|^2 &\leq \|x^k - x\|^2 - \|y^k - x^k\|^2 - \|x^{k+1} - y^k\|^2 \notag\\
 & \quad + 2 \lambda_k \left( f_1(x^k, x) + f_2(x^k, x) \right) -2 \lambda_k \left( f_1(x^k, y^k) + f_2(x^k, x^{k+1}) \right) \notag\\
&= \|x^k - x\|^2 - \|y^k - x^k\|^2 - \|x^{k+1} - y^k\|^2 \notag\\
& \quad + 2 \lambda_k f(x^k, x) - 2\lambda_k \left( f_1(x^k, y^k) + f_2(x^k, x^{k+1}) \right).
\end{align}
From $g_1^k \in \partial_2f_1(x^k,x^k)$ and $f_1(x^k, x^k) = 0$, it follows that
\beqs
f_1(x^k, y^k) - f_1(x^k, x^k) \ge \langle g_1^k, y^k - x^k \rangle,
\eeqs
which implies
\beq \label{ct6}
-2 \lambda_k f_1(x^k, y^k) \leq - 2 \lambda_k \langle g_1^k, y^k - x^k \rangle.
\eeq
By using the Cauchy-Schwarz inequality and the fact that $\|g_1^k\| \leq \eta_k$, from (\ref{ct6})
one can write
\beq \label{ct7}
-2 \lambda_k f_1(x^k, y^k) \leq 2 \frac{\beta_k}{\eta_k} \eta_k \|y^k - x^k\| = 2 \beta_k \|y^k - x^k\|.
\eeq
By the same argument, we obtain
\beq \label{ct8}
-2 \lambda_k f_2(x^k, x^{k+1}) \leq 2 \beta_k \|x^{k+1} - x^k\|.
\eeq
Replacing (\ref{ct7}) and (\ref{ct8}) to (\ref{ct5}) we get
\begin{align} \label{ct9}
 \|x^{k+1} - x\|^2 &\leq \|x^k - x\|^2 + 2\lambda_k f(x^k, x) \notag\\
 &\quad + 2 \beta_k \|y^k - x^k\| + 2 \beta_k \|x^{k+1}-x^k\| - \|y^k - x^k\|^2 - \|x^{k+1} - y^k\|^2 \notag\\
 &= \|x^k - x\|^2 + 2\lambda_k f(x^k, x) \notag \\
 &\quad + 2\beta_k^2 - \left(\|y^k - x^k\| - \beta_k\right)^2 - \left(\|x^{k+1} - x^k\| - \beta_k\right)^2 \notag\\
 &\leq  \|x^k - x\|^2 + 2\lambda_k f(x^k, x) + 2\beta_k^2.
\end{align}
Note that by definition of $x^* \in S(f, C) = S^d(f, C)$ we have $f(x^k, x^*) \leq 0$.
Therefore, by taking $x = x^*$ in (\ref{ct9}) we obtain
\begin{align}\label{ct12}
\|x^{k+1} - x^*\|^2 &\leq \|x^k - x^*\|^2 + 2 \lambda_k f(x^k, x^*) + 2\beta_k^2 \notag\\
&\leq \|x^k - x^*\|^2 +  2\beta_k^2.
\end{align}
Since $\sum_{k = 0}^{\infty} \beta_k^2 < +\infty$ by assumption, in virtue of Lemma
\ref{lem1}, it follows from (\ref{ct12}) that the sequence $\{\|x^k - x^*\|\}$ is convergent.

Next, we prove that any cluster point of the sequence $\{x^k\}$ is a solution of $(EP)$.

Indeed, from  (\ref{ct12}) we have
\beq\label{ct16bc}
- 2 \lambda_k f(x^k, x^*) \leq \|x^k - x^*\|^2 - \|x^{k+1} - x^*\|^2 + 2 \beta_k^2 \quad \forall k \in \mathbb{N}.
\eeq
By summing up we obtain
\beqs
2 \displaystyle \sum_{i = 0}^\infty \lambda_i\left(- f(x^i, x^*)\right)
\leq \|x^0 - x^*\|^2  + 2 \displaystyle \sum_{i = 0}^\infty \beta_i^2 < \infty.
\eeqs

On the other hand, by  Assumption (A2) the sequences $\{g_1^k\}, \{g_2^k\}$ are bounded.
This fact, together with the construction of $\{\beta_k\}$, implies that
there exists $M > 0$ such that $\|g_1^k\| \leq M, \|g_2^k\| \leq M, \beta_k \leq M$
for all $k \in \mathbb{N}$. Hence for each $k \in \mathbb{N}$ we have
\beqs
\eta_k = \max\{\beta_k, \|g_1^k\|, \|g_2^k\|\} \leq M,
\eeqs
which implies
  $\sum_{i=0}^\infty \lambda_i = \infty$.  Thus, from  $f(x^i,x^*) \leq 0$,  it holds that
$$\lim\sup f(x^k,x^*) = 0 \quad \forall x^* \in S(C,f).$$
Fixed $x^* \in S(C,f)$ and let $\{x^{k_j}\}$ be a subsequence of $\{x^k\}$ such that
$$\lim_{k}\sup f(x^k,x^*) = \lim_{j} f(x^{k_j},x^*) = 0.$$
Since $\{x^{k_j}\}$ is bounded, we may assume that $\{x^{k_j}\}$ weakly converges to some $\bar{x}$.
Since $f(\cdot,x^*)$ is weakly upper semicontinuous by assumption (A1), we have
\beq\label{ct21}
f(\bar{x},x^*) \geq \lim f(x^{k_j},x^*) = 0.
\eeq
Then it follows from the monotonicity of $f$ that
$f(x^*,\bar{x}) \leq 0$. On the other hand, since $x^* \in S(C, f)$, by definition we have
$f(x^*,\bar{x}) \geq 0$. Therefore we obtain $f(x^*,\bar{x}) = 0$.
Again, the monotonicity of $f$  implies $f(\bar{x}, x^*) \le 0$, and therefore,
by (\ref{ct21})  one has $f(\bar{x}, x^*) = 0$.
Since $f(x^*,\bar{x}) = 0$ and $f(\bar{x}, x^*) = 0$, it follows from paramonotonicity of $f$
that $\bar{x}$ is a solution to $(EP)$. Since $\|x^k - \bar{x}\|$ converges,
from the fact that $x^{k_j}$ weakly converges to $\bar{x}$, we can conclude that
the whole sequence $\{x^k\}$ weakly converges to $\bar{x}$.

 Note that if $f$ is strongly pseudomonotone,
then Problem $(EP)$ has a unique solution (see \cite{MQ2015} Proposition 1).
Let $x^*$ be the unique solution of $(EP)$. By definition of $x^*$ we have
\beqs
f(x^*, x) \ge 0 \quad \forall x \in C,
\eeqs
which, by strong pseudomonotonicity of $f$, implies
\beq\label{ct22}
f(x, x^*) \le - \beta \|x-x^*\|^2 \quad \forall x \in C.
\eeq
By choosing $x = x^k$ in (\ref{ct22}) and then applying to (\ref{ct9}) we obtain
\beqs
\|x^{k+1} - x^*\|^2 \le (1 - 2\beta\lambda_k) \|x^k - x^*\|^2 + 2 \beta_k^2 \quad \forall k \in \mathbb{N},
\eeqs
 which together with   the construction of $\beta_k$ and $\lambda_k$, by virtue of
Lemma \ref{LemmaXu} with $\delta_k \equiv 0$,  implies  that
\beqs
\lim_{k \to +\infty} \|x^k - x^*\|^2 = 0,
\eeqs
i.e., $x^k$ strongly converges to the unique solution $x^*$ of $(EP)$.
$\square$
\end{proof}

The following simple example shows  that without paramonotonicity, the algorithm may not be convergent.
Let us consider the following example, taken from \cite{FP2003}, where
$f_(x,y):= \langle Ax, y-x\rangle$ and $C:= \mathbb{R}^2$ and
\beqs
A = \begin{bmatrix}
0 & 1\\
-1 & 0
\end{bmatrix}.
\eeqs
Clearly,  $x^*=(0,0)^T$ is the unique solution of this problem.
It is easy to check that this bifunction is monotone, but not paramonotone.
An elementary computation shows that
$$x^{k+1} = x^k - \lambda_k A x^k = (x^k_1-\lambda_k x^k_2, x^k_2 + \lambda_k x^k_1)^T.$$
Thus, $\|x^{k+1}\|^2 = (1+\lambda^2_k)\|x^k\|^2 > \|x^k\|^2$ if $x^k \neq 0$ ,
which implies that the sequence $\{x^k\}$  does not converge to the solution $x^* = 0$
for any starting point $x^0 \neq 0$.

To illustrate  our motivation let us consider the following optimization problem
\begin{align*}
(OP) \quad \min \quad &\varphi(x) := \frac{1}{2} x^T Q x - \displaystyle \sum_{i=1}^n \ln (1 + \max\{0, x_i\})\\
\text{subject to} \quad &x_i \in [a_i, b_i] \subset \mathbb{R} \quad (i = 1, \ldots, n),
\end{align*}
where $Q \in \mathbb{R}^{n \times n}$ is a positive semidefinite matrix.
This problem is equivalent to the following equilibrium problem
\beqs
\text{Find } x^* \in C \text{ such that } f(x^*, y) \ge 0 \ \forall y \in C,
\eeqs
where $C := [a_1, b_1] \times \ldots \times [a_n, b_n]$, and $f(x, y) := \varphi(y) - \varphi(x)$.
We can split the function
$f(x, y) = f_1(x, y) + f_2(x, y)$
by taking
\beqs
f_1(x, y) = \frac{1}{2} y^T Q y - \frac{1}{2} x^T Q x,
\eeqs
and
\beqs
f_2(x, y) = \displaystyle \sum_{i = 1}^n \left( \ln(1 + \max\{0, x_i\}) - \ln(1 + \max\{0, y_i\}) \right).
\eeqs
Since $Q$ is a positive semidefinite matrix and $\ln(\cdot)$ is concave on $(0, +\infty)$, the functions $f_1, f_2$
are equilibrium functions satisfying conditions (A1)-(A3).
Clearly, $f_1(x, \cdot)$ is convex quadratic, not necessarily separable,
while  $f_2(x, \cdot)$ is separable, not necessarily differentiable,
but their sum does not inherit these properties.

In order to obtain the convergence without paramonotonicity we  use the iterate $x^k$ to define an ergodic sequence by tanking
$$z^k:= \dfrac{\sum_{i=0}^k \lambda_i x^i} { \sum_{i=0}^k \lambda_i}.$$
Then we have the following convergence result.

\begin{thm}\label{thm2} Under the assumption in Theorem 1,  the ergodic  sequence $\{z^k\}$
  converges weakly to a solution of $(EP)$.
\end{thm}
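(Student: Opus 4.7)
The plan is to apply the Passty-type Lemma \ref{lem2} with weights $r_k=\lambda_k$ (recalling that $\sum_k \lambda_k=+\infty$ was already established inside the proof of Theorem \ref{thm1}) and target set $S=S(C,f)$. Two conditions must be verified: (ii) every weak cluster point of $\{z^k\}$ lies in $S(C,f)$, and (i) $\lim_K \|z^K-x^*\|$ exists for each $x^*\in S(C,f)$.

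For condition (ii), I would specialize inequality (\ref{ct9}) to an arbitrary $x\in C$ and telescope over $k=0,\dots,K$ to get
\beqs
-2\sum_{k=0}^{K}\lambda_k f(x^k,x)\le \|x^0-x\|^2+2\sum_{k=0}^{\infty}\beta_k^2=:M_x.
\eeqs
Monotonicity of $f$ (Assumption (A3)) gives $-f(x^k,x)\ge f(x,x^k)$, and Jensen's inequality applied to the convex function $f(x,\cdot)$ (Assumption (A1)) yields $\Lambda_K f(x,z^K)\le \sum_{k=0}^{K}\lambda_k f(x,x^k)$, where $\Lambda_K:=\sum_{k=0}^{K}\lambda_k\to+\infty$. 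Combining,
\beqs
f(x,z^K)\le \frac{M_x}{2\Lambda_K}\longrightarrow 0,
\eeqs
so $\limsup_K f(x,z^K)\le 0$ for every $x\in C$. Since $f(x,\cdot)$ is convex and lsc, hence weakly lsc, any weak cluster point $\bar z$ of $\{z^k\}$ satisfies $f(x,\bar z)\le 0$ for every $x\in C$, giving $\bar z\in S^d(C,f)=S(C,f)$ via Lemma \ref{lem3} and (A3).

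For condition (i), the telescoping identity $z^{K+1}=(1-\mu_{K+1})z^K+\mu_{K+1}x^{K+1}$ with $\mu_{K+1}:=\lambda_{K+1}/\Lambda_{K+1}$, combined with convexity of $\|\cdot-x^*\|^2$, produces
\beqs
\|z^{K+1}-x^*\|^2\le (1-\mu_{K+1})\|z^K-x^*\|^2+\mu_{K+1}\|x^{K+1}-x^*\|^2.
\eeqs
By Theorem \ref{thm1}, $\lim_k\|x^k-x^*\|^2=L$ exists and $\|x^{k+1}-x^*\|^2\le \|x^k-x^*\|^2+2\beta_k^2$ with $\sum\beta_k^2<\infty$. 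My plan is to reorganize the display above into a Fejer-type bound $\|z^{K+1}-x^*\|^2\le \|z^K-x^*\|^2+\tau_K$ with $\sum_K\tau_K<\infty$, after which Lemma \ref{lem1} delivers existence of $\lim_K\|z^K-x^*\|^2$.

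The hard part will be this final reorganization: because $\sum_K \mu_K=+\infty$, a crude bound of the excess $\mu_{K+1}(\|x^{K+1}-x^*\|^2-\|z^K-x^*\|^2)$ by $\mu_{K+1}$ times a constant is insufficient. The summability of $\tau_K$ has to be extracted by splitting on the sign of $\|x^{K+1}-x^*\|^2-\|z^K-x^*\|^2$ and exploiting the fact that the total positive variation $\sum_k(\|x^{k+1}-x^*\|^2-\|x^k-x^*\|^2)_{+}\le 2\sum\beta_k^2<\infty$ is finite, together with the convergence of $\|x^k-x^*\|^2$ to $L$, to absorb the cumulative positive excess. Once both conditions (i) and (ii) are in hand, Lemma \ref{lem2} delivers weak convergence of $\{z^k\}$ to some element of $S(C,f)$, completing the proof.
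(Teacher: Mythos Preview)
Your verification of condition (ii) is correct and is exactly the paper's argument; you are simply more explicit than the paper in invoking monotonicity (A3) to pass from $-f(x^k,x)$ to $f(x,x^k)$ before applying Jensen.

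The gap is in condition (i). The paper dispatches it in one line (``By the definition of $z^k$, the sequence $\{\|z^k-x^*\|\}$ is convergent too''), and you are right to feel this needs work. However, the route you sketch does not close it. From
\[
\|z^{K+1}-x^*\|^2 \le (1-\mu_{K+1})\|z^K-x^*\|^2+\mu_{K+1}\|x^{K+1}-x^*\|^2,
\qquad \sum_K\mu_K=+\infty,\ \ \|x^{K+1}-x^*\|^2\to L,
\]
one can extract only $\limsup_K\|z^K-x^*\|^2\le L$; the inequality is one-sided and admits oscillating subsolutions, so it cannot by itself yield convergence. Your ``split on the sign and use the finite positive variation of $\|x^k-x^*\|^2$'' idea controls increments of $\|x^k-x^*\|^2$, not the mixed difference $\|x^{K+1}-x^*\|^2-\|z^K-x^*\|^2$ that actually appears, so there is no reason the resulting $\tau_K$ should be summable.

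The clean repair is to read Lemma~\ref{lem2} in Passty's original form, where hypothesis (i) is imposed on $\{x^k\}$ rather than on $\{z^k\}$; then the convergence of $\|x^k-x^*\|$ already obtained in Theorem~\ref{thm1} is all that is needed. Equivalently, bypass (i) and argue uniqueness of the weak cluster point directly: for $z_1,z_2\in S(C,f)$,
\[
\langle x^k,z_1-z_2\rangle=\tfrac12\bigl(\|x^k-z_2\|^2-\|x^k-z_1\|^2\bigr)+\tfrac12\bigl(\|z_1\|^2-\|z_2\|^2\bigr)
\]
converges, hence so do the weighted Ces\`aro means $\langle z^k,z_1-z_2\rangle$; applying this with $z_1,z_2$ chosen as any two weak cluster points $\bar z_1,\bar z_2$ of $\{z^k\}$ (both in $S(C,f)$ by your part (ii)) forces $\|\bar z_1-\bar z_2\|^2=0$.
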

  \begin{proof} In the proof of Theorem 1, we have shown that   the sequence $\{\|x^k - x^*\|\}$ is convergent.
   By the definition of $z^k$, the sequence $\{\|z^k - x^*\|\}$ is convergent too. In order to apply Lemma  \ref{lem2}, now we show that all weakly cluster points
   of $\{z^k\}$ belong to $S(f, C)$.
  In fact,  using the inequality (\ref{ct12}),  by taking the sum of its two sides over all indices
we have
\begin{align*}
2 \displaystyle \sum_{i = 0}^k \lambda_i f(x, x^i)
&\leq \displaystyle \sum_{i = 0}^k \left(\|x^i - x\|^2  - \|x^{i+1} - x\|^2 + 2 \beta_i^2\right)\\
&= \|x^0 - x\|^2  - \|x^{k+1} - x\|^2 + 2 \displaystyle \sum_{i = 0}^k \beta_i^2\\
&\leq \|x^0 - x\|^2  + 2 \displaystyle \sum_{i = 0}^k \beta_i^2.
\end{align*}
By using this inequality,  from definition of $z^k$ and convexity of $f(x, \cdot)$, we  can write
\begin{align}\label{ct18}
f(x, z^k) &= f\left(x, \dfrac{\sum_{i=0}^k \lambda_i x^i}{\sum_{i=0}^k \lambda_i} \right) \notag\\
&\leq \dfrac{\sum_{i = 0}^k \lambda_i f(x, x^i)}{ \sum_{i = 0}^k \lambda_i} \notag\\
&\leq \dfrac{\|x^0 - x\|^2  + 2 \sum_{i = 0}^k \beta_i^2}{2  \sum_{i = 0}^k \lambda_i}.
\end{align}

As we have shown in the proof of Theorem 1 that
\beqs
\lambda_k = \dfrac{\beta_k}{\eta_k} \geq \dfrac{\beta_k}{M}.
\eeqs
Since $\sum_{k = 0}^{\infty} \beta_k = +\infty$, we  have
 $\sum_{k=0}^{\infty} \lambda_k = +\infty.$
Then, it follows from
 (\ref{ct18})  that
\beq\label{ct20}
\lim_{k \to \infty} \inf f(x, z^k) \leq 0.
\eeq
Let $\bar{z}$ be any weakly cluster of $\{z^k\}$. Then there exists a subsequence $\{z^{k_j}\}$ of
$\{z^k\}$ such that $z^{k_j} \rightharpoonup \bar{z}$. %By weak continuity of $f$ (condition (A2)),
Since $f(x, \cdot)$ is lower semicontinuous, tt follows from (\ref{ct20}) that
\beqs
f(x, \bar{z}) \le 0.
\eeqs
Since this inequality hold for arbitrary $x \in C$, it means that $\bar{z} \in S^d(f, C) = S(f, C)$.
 Thus it follows from  Lemma  \ref{lem2} that the sequence $\{z^k\}$ converges weakly
to a point $z^* \in S(f, C)$, which is a solution to $(EP)$.
$\square$
\end{proof}

\begin{rem}\label{rema2}
In case that $\cH$ is of finite dimension, we have
$\|z^{k+1}-z^k\| \to 0$ as $k\to \infty$. Since $\sum_{k \to +\infty} \lambda_k^2 < +\infty$,
at large enough iteration $k$, the value of $\lambda_k$
closes to $0$, which makes the intermediate iteration points $y^k, x^{k+1}$ close to $x^k$.
In turn, the new generated ergodic point $z^{k+1}$ does not change much from the previous one.
This slows down the convergence of the sequence $\{z^k\}$. In order to enhance
the convergence of the algorithm, it suggests a restart strategy
by replacing the starting point $x^0$ with $x^k$
whenever $\|z^{k+1}-z^k\| \leq \tau$ with an appropriate $\tau > 0$.
\end{rem}

%%%%%%%%%%%%%%%%%%%%%%%%%%%%%%%%%%%%%%%%%%%%%%%%%%%%%%%%%%%%%%%%%%%%%%%%

\section{Numerical experiments}\label{SectionExperiments}

We used MATLAB R2016a for implementing  the proposed algorithms. All experiments were
conducted on a computer with a Core i5 processor, 16 GB of RAM, and Windows 10.

As we have noted in Remark \ref{rema2}, to improve the performance of our
proposed algorithm, we reset $x^0$ to $x^k$ whenever $\|z^{k+1}-z^k\| \leq \tau$
with an appropriate $\tau > 0$ and then restart the algorithm from beginning with
the new starting point $x^0$ if the stoping criterion $\|z^{k+1}-z^k\| \leq \epsilon$ is still not satisfied.  In all   experiments, we set $\tau := 10^{-3}$,
and terminated the algorithm when either the number of iterations exceeds $10^4$,
or the distance between the two consecutive generated ergodic points is less than $\epsilon : =10^{-4}$
(i.e., $\|z^{k+1} - z^k\| < 10^{-4}$). All the tests reported below were solved within 60 seconds.

We applied Algorithm 1 to compute  a Nash equilibrium of  a  linear Cournot oligopolistic model
with some additional joint constraints on the model's variables. The precise description of this model
is as follows.

There are $n$ firms producing a common homogeneous commodity.
Let $x_i$ be the production level of firm $i$, and $x = (x_1, \ldots, x_n)$ the vector of
production levels of all these firms. Assume that the production price $p_i$
given by firm $i$ depends on the total quantity $\sigma = \sum_{i = 1}^n x_i$
of the commodity as follows
\begin{center}
$p_i(\sigma) = \alpha_i - \delta_i \sigma\qquad(\alpha_i > 0, \delta_i > 0, i = 1, \ldots, n).$
\end{center}
Let $h_i(x_i)$ denote the production cost of firm $i$ when its production
level is $x_i$ and assume that the cost functions are affine of the forms
\begin{center}
$h_i(x_i) = \mu_i x_i + \xi_i \qquad(\mu_i > 0, \xi_i \ge 0, i = 1, \ldots, n).$
\end{center}
The profit of firm $i$ is then given by
\begin{center}
$q_i(x_1, \ldots, x_n) = x_i p_i(x_1 + \ldots + x_n) - h_i(x_i) \qquad (i = 1, \ldots, n).$
\end{center}
Each firm $i$ has a strategy set $C_i \subset \mathbb{R}_+$
consisting of its possible production levels, i.e., $x_i \in C_i$.
Assume that there are lower and upper bounds on quota of the commodity
(i.e., there exist $\underline{\sigma}, \overline{\sigma} \in \mathbb{R}_+$ such that
$\underline{\sigma} \le \sigma = \sum_{i = 1}^n x_i \le \overline{\sigma}$).
So the set of feasible production levels can be described by
\beqs
\Omega := \{x \in \mathbb{R}^n_+ \ | \ x_i \in C_i (i = 1, \ldots, n), \sum_{i = 1}^n x_i \in [\underline{\sigma}, \overline{\sigma}]\}.
\eeqs
Each firm $i$ seeks to maximize its profit by choosing the corresponding
production level $x_i$ under the presumption that the production
of the other firms are parametric input. In this context, a Nash equilibrium point for
the model is a point $x^* \in \Omega$ satisfying
\begin{center}
$q_i(x^*[x_i]) \le q_i(x^*) \qquad \forall x \in \Omega, i = 1,\ldots, n,$
\end{center}
where $x^*([x_i])$ stands for the vector obtained from $x^*$ by replacing
the component $x_i^*$ by $x_i$. It means that, if some firm $i$ leaves
its equilibrium strategy while the others keep their equilibrium positions,
then the profit of firm $i$ does not increase.
It has been shown that the unique Nash equilibrium point $x^*$ is also
the unique solution to the following equilibrium problem
\begin{equation}
\textrm {Find $x \in \Omega$ such that $f(x, y) := (\tilde{B} x + \mu - \alpha)^T (y - x) + \frac{1}{2} y^T B y - \frac{1}{2} x^T B x \ge 0 \ \forall y \in \Omega$,} \tag{$EP1$}
\end{equation}
where $\mu = (\mu_1, \ldots, \mu_n)^T, \alpha = (\alpha_1, \ldots, \alpha_n)^T$, and
\beqs
\tilde{B} = \begin{bmatrix}
0 & \delta_1 & \delta_1 & \ldots & \delta_1\\
\delta_2 & 0 & \delta_2 & \ldots & \delta_2\\
\cdot & \cdot & \cdot & \ldots & \cdot\\
\delta_n & \delta_n & \delta_n & \ldots & 0
\end{bmatrix},\qquad
B =  \begin{bmatrix}
2 \delta_1 & 0 & 0 & \ldots & 0\\
0 & 2 \delta_2 & 0 & \ldots & 0\\
\cdot & \cdot & \cdot & \ldots & \cdot\\
0 & 0 & 0 & \ldots & 2 \delta_n
\end{bmatrix}.
\eeqs
Note that $f(x, y) = f_1(x, y) + f_2(x, y)$ in which
\begin{align*}
f_1(x, y) &= (\tilde{B} x + \mu - \alpha)^T (y - x),\\
f_2(x, y) &= \frac{1}{2} y^T B y - \frac{1}{2} x^T B x.
\end{align*}
It is obvious that $f, f_1, f_2$ are equilibrium functions satisfying conditions (A1)-(A3).

For numerical experiments, we set $C_i = [10, 50]$ for $i = 1, \ldots, n$,
$\underline{\sigma} = 10n + 10$, and $\overline{\sigma} = 50n - 10$.
The initial guess was set to $x^0_i = 30 (i = 1, \ldots, n)$.
We tested the algorithm on problem instances
with different numbers $n$ of companies but having the following fixed
values of parameters $\alpha_i = 120, \delta_i = 1, \mu_i = 30$ for $i = 1, \ldots, n$.
Table \ref{TableExp2} reports the outcomes of Algorithm 1 with restart strategy applied to
these instances for different values of dimension $n$ and appropriate values of parameters
$\beta_k$.

\begin{table}[H]
\centering
\begin{tabular}{|c|c|c||c|c|}
\hline
\multirow{3}{*}{$n$} & \multirow{3}{*}{$\beta_k$} & \multirow{2}{*}{Total number of} & \multirow{2}{*}{Number of} & \multirow{2}{*}{Number of iterations}\\
 & & & & \\
 & & iterations & restarts & from the last restart\\
\hline
2 & $10/(k+1)$ & 2 & 0 & 2\\
3 & $10/(k+1)$ & 639 & 2 & 9\\
4 & $10/(k+1)$ & 911 & 2 & 4\\
5 & $10/(k+1)$ & 1027 & 2 & 2\\
10 & $10/(k+1)$ & 1201 & 1 & 2\\
10 & $100/(k+1)$ & 266 & 1 & 2\\
15 & $10/(k+1)$ & 2967 & 2 & 2\\
15 & $100/(k+1)$ & 408 & 1 & 2\\
20 & $10/(k+1)$ & 5007 & 2 & 2\\
20 & $100/(k+1)$ & 539 & 1 & 2\\
\hline
\end{tabular}
\caption{Performance of Algorithm 1 in solving linear Cournot oligopolistic model
with additional joint constraints.}
\label{TableExp2}
\end{table}

On one hand, the results reported in Table \ref{TableExp2} show the applicability
of Algorithm 1 for solving linear Cournot-Nash oligopolistic model with joint constraints.
On the other hand, it follows from this table that the choice of parameter $\beta_k$ is
crucial for the convergence of the algorithm, since changing the value of this parameter
may significantly reduce the number of iterations. Furthermore, the last two columns
of Table \ref{TableExp2} show that, by applying our suggested restart strategy,
we can find `good' starting points from that the algorithm terminated after few iterations.

%%%%%%%%%%%%%%%%%%%%%%%%%%%%%%%%%%%%%%%%%%%%%%%%%%%%%%%%%%%%%%%%%%%%%%%%

\section{Conclusion}\label{SectionConclusion}

We have proposed splitting algorithms for monotone equilibrium problems where
the bifunction is the sum of the two ones. The first algorithm uses an ergodic sequence ensuring
convergence without extragradient (double projection). The second one is for paramonotone
equilibrium problems ensuring convergence without using the ergodic sequence.
A restart strategy has been used to enhance the convergence of the proposed algorithms.

%%%%%%%%%%%%%%%%%%%%%%%%%%%%%%%%%%%%%%%%%%%%%%%%%%%%%%%%%%%%%%%%%%%%%%%%

\end{document}